\newcommand{\noun}[1]{\textsc{#1}}
\providecommand{\algorithmname}{Algorithm}
\theoremstyle{plain}
\newtheorem{thm}{\protect\theoremname}
  \theoremstyle{definition}
  \newtheorem{problem}[thm]{\protect\problemname}
  \theoremstyle{plain}
  \newtheorem{lem}[thm]{\protect\lemmaname}
  \theoremstyle{remark}
  \newtheorem{claim}[thm]{\protect\claimname}
  \theoremstyle{definition}
  \newtheorem{defn}[thm]{\protect\definitionname}
  \theoremstyle{plain}
  \newtheorem{prop}[thm]{\protect\propositionname}
  \providecommand{\claimname}{Claim}
  \providecommand{\definitionname}{Definition}
  \providecommand{\lemmaname}{Lemma}
  \providecommand{\problemname}{Problem}
  \providecommand{\propositionname}{Proposition}
\providecommand{\theoremname}{Theorem}
\begin{document}

\title{A Hough Transform Approach to Solving Linear Min-Max Problems}

\author{Carmi Grushko\\
carmi.grushko@gmail.com\\
\\
Technion, Israel Institute of Technology}

\maketitle
Several ways to accelerate the solution of 2D/3D linear min-max problems
in $n$ constraints are discussed. We also present an algorithm for
solving such problems in the 2D case, which is superior to \noun{Cgal}'s
linear programming solver, both in performance and in stability.

\section{Purpose}

This work is focused on several ways to accelerate the solution of
2D/3D linear min-max problems in $n$ constraints. We also present
an algorithm for solving such problems in the 2D case, which is superior
to \noun{Cgal}'s linear programming solver, both in performance and
in stability.
\begin{problem}
Linear Min-Max problem, also known as $L_{\infty}$ linear optimization
\[
\arg\min_{x,y\in\mathbb{R}}\left(\max_{i\in\left[n\right]}\left|a_{i}x+b_{i}y+c_{i}\right|\right)=\arg\min_{x,y}\left(\left\Vert M\cdot\left(x,y,1\right)^{T}\right\Vert _{\infty}\right),
\]
where $M=\left[a\vert b\vert c\right]$.
\end{problem}
This problem can be re-written as a Linear Programming of the following
form.
\begin{problem}
\label{fig:Central-Min-Max-problem. (no last two constraints)}
\begin{eqnarray*}
\mbox{minimize}_{x,y,t} &  & t\\
\mbox{s.t} &  & a_{1}x+b_{1}y+c_{1}\leq t\\
 &  & \,\,\,\,\,\,\,\,\,\,\,\,\,\,\vdots\\
 &  & a_{n}x+b_{n}y+c_{n}\leq t.
\end{eqnarray*}

\end{problem}
It is known for several decades (\citet{Megiddo}) that general Linear
Programming problems (and thus, also linear min-max problems) can
be solved in $\mathcal{O}\left(n\right)$ time when the dimension
is constant. Therefore, there is only hope to demonstrate a constant
factor acceleration; however, in real applications, this can be valuable.

\section{The Hough Transform}

We begin with a quick overview of a natural extension to the Hough
Transform \citet{Hough} to 3D. We note that the following theorems,
although proved for 3D, hold equally well in 2D; the proofs are identical
if we rename the $z$ coordinate into $y$.

Given a point $p=\left(a,b,c\right)$ in $\mathbb{R}^{3}$, we define
its dual plane as $\mathcal{H}\left(p\right)=\ell\left(x,y\right)=ax+by-c$,
and given a plane $\pi\left(x,y\right)=ax+by+c$, we define its dual
point as $\mathcal{H}\left(\pi\right)=\left(a,b,-c\right)$. The usefulness
of these definitions is highlighted in the following lemmas.
\begin{lem}
\label{lem:p > pi iff H(p) > H(pi)}A point $p=\left(x_{0},y_{0},z_{0}\right)$
is above a plane $\pi\left(x,y\right)=ax+by+c$ iff the plane $\mathcal{H}\left(p\right)$
is below the point $\mathcal{H}\left(\pi\right)$. Moreover, $p$
is on $\pi$ iff $\mathcal{H}$$\left(\pi\right)$ is on $\mathcal{H}\left(p\right)$.\end{lem}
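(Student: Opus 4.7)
The plan is to prove both directions simultaneously by reducing each geometric condition to a single algebraic inequality and observing that the two inequalities coincide after rearrangement. Since the ``on'' statement is just the equality case of the ``above/below'' statement, I will handle it as a corollary of the same computation.

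First I will unpack ``$p$ is above $\pi$.'' Using the convention that a point is above a graph-plane when its $z$-coordinate exceeds the value of the plane at the same $(x,y)$, this is equivalent to
\[
z_{0} \;>\; a x_{0} + b y_{0} + c.
\]
Next I will unpack ``$\mathcal{H}(p)$ is below $\mathcal{H}(\pi)$.'' The plane $\mathcal{H}(p)$ is the graph of $\ell(x,y) = x_{0}x + y_{0}y - z_{0}$, and the dual point is $\mathcal{H}(\pi) = (a,b,-c)$. The plane lies below the point precisely when the value of $\ell$ at the $(x,y)$-projection of $\mathcal{H}(\pi)$ is less than its $z$-coordinate, i.e.\
\[
x_{0}\,a + y_{0}\,b - z_{0} \;<\; -c.
\]

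The key step is then to notice that rearranging the second inequality gives exactly the first: adding $c - x_{0}a - y_{0}b$ to both sides of $x_{0}a + y_{0}b - z_{0} < -c$ yields $z_{0} > a x_{0} + b y_{0} + c$. Hence the two geometric conditions are equivalent. Replacing the strict inequality ``$>$'' by equality ``$=$'' throughout the same chain proves that $p$ lies on $\pi$ iff $\mathcal{H}(\pi)$ lies on $\mathcal{H}(p)$, which gives the ``moreover'' clause.

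There is no real obstacle here; the proof is entirely a matter of reading off the definitions carefully. The only subtlety worth flagging is the sign convention built into the duality: the map on planes sends $c \mapsto -c$, which is precisely what makes the ``above/below'' relation flip in the desired way. If one instead defined $\mathcal{H}(\pi) = (a,b,c)$, the lemma would become ``above iff above,'' which is false as stated because of the sign of $z_{0}$ appearing in $\ell(x,y) = x_{0}x + y_{0}y - z_{0}$. So I will make a brief remark that the asymmetric sign in the definition of $\mathcal{H}$ is exactly what is needed for the two inequalities to line up.
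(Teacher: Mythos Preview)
Your proof is correct and follows essentially the same route as the paper: both arguments unpack the definitions of $\mathcal{H}(p)$ and $\mathcal{H}(\pi)$, evaluate $\mathcal{H}(p)$ at the $(x,y)$-coordinates of $\mathcal{H}(\pi)$, and observe that the resulting inequality $x_{0}a+y_{0}b-z_{0}<-c$ is a rearrangement of $z_{0}>ax_{0}+by_{0}+c$. Your added remark about the sign convention is a nice clarification but not a departure from the paper's approach.
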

\begin{proof}
We know that $\pi\left(x_{0},y_{0}\right)=x_{0}a+y_{0}b+c<z_{0}$;
by definition of the Hough transform, $\left[\mathcal{H}\left(p\right)\right]\left(\mathcal{H}\left(\pi\right)_{x},\mathcal{H}\left(\pi\right)_{y}\right)=x_{0}\mathcal{H}\left(\pi\right)_{x}+y_{0}\mathcal{H}\left(\pi\right)_{y}-z_{0}$
which equals $x_{0}a+y_{0}b-z_{0}$. Because $x_{0}a+y_{0}b+c<z_{0}$,
we must have $x_{0}a+y_{0}b-z_{0}<-c$, concluding that $\left[\mathcal{H}\left(p\right)\right]\left(\mathcal{H}\left(\pi\right)_{x},\mathcal{H}\left(\pi\right)_{y}\right)<\mathcal{H}\left(\pi\right)_{z}$.
A very similar argument can be used to show that a point is on a plane
iff its dual is on its dual. \end{proof}
\begin{lem}
The upper envelope of a set of planes corresponds to the lower convex
hull of the planes' dual points.\end{lem}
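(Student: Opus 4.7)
The plan is to show the correspondence in both directions: a plane $\pi_i$ contributes to the upper envelope of $\{\pi_1,\ldots,\pi_n\}$ if and only if its dual point $\mathcal{H}(\pi_i)$ lies on the lower convex hull of $\{\mathcal{H}(\pi_j)\}_{j=1}^n$, with Lemma~\ref{lem:p > pi iff H(p) > H(pi)} doing essentially all the work. The first step is to re-express upper-envelope membership in witness form: $\pi_i$ is on the upper envelope precisely when there exists a point $(x_0,y_0)$ with $\pi_i(x_0,y_0)\ge\pi_j(x_0,y_0)$ for every $j$. Setting $p_0 := (x_0,y_0,\pi_i(x_0,y_0))$, this is exactly the assertion that $p_0$ lies on $\pi_i$ and lies on or above each $\pi_j$.

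Next I would transfer this configuration to dual space. Applying Lemma~\ref{lem:p > pi iff H(p) > H(pi)} with the witness point $p=p_0$ converts ``$p_0$ on $\pi_i$'' into ``$\mathcal{H}(\pi_i)$ on the plane $\mathcal{H}(p_0)$'', and converts ``$p_0$ on or above $\pi_j$'' into ``$\mathcal{H}(\pi_j)$ on or above $\mathcal{H}(p_0)$'' for every $j$. Thus $\mathcal{H}(p_0)$ supports the dual point set from below while touching it at $\mathcal{H}(\pi_i)$, which by definition places $\mathcal{H}(\pi_i)$ on the lower convex hull. For the converse I would invoke the involutivity of $\mathcal{H}$: applying the defining formulas twice, the two sign flips on the $c$-coordinate cancel, so any plane supporting the dual points from below is itself $\mathcal{H}(p_0)$ for a unique $p_0$; running Lemma~\ref{lem:p > pi iff H(p) > H(pi)} in the reverse direction then recovers a witness $(x_0,y_0)$ at which $\pi_i$ attains the pointwise maximum.

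The main obstacle, such as it is, is purely bookkeeping: the sign flip in $\mathcal{H}(\pi)=(a,b,-c)$ must be tracked carefully so that ``above'' on the primal side becomes ``above'' on the dual side in the right orientation, which is why phrasing everything in terms of the single witness point $p_0$ and reading Lemma~\ref{lem:p > pi iff H(p) > H(pi)} off directly is the cleanest route. Boundary and tie configurations, where $\pi_i$ touches the envelope only along a lower-dimensional locus (equivalently, $\mathcal{H}(\pi_i)$ sits on an edge or vertex of the lower hull rather than in the interior of a facet), require no separate treatment, since they are absorbed by the ``on'' clause of the same lemma.
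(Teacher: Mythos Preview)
Your proposal is correct and follows essentially the same route as the paper: pick a witness point on the upper envelope, apply Lemma~\ref{lem:p > pi iff H(p) > H(pi)} to translate ``on/above all $\pi_j$'' into ``on/below all $\mathcal{H}(\pi_j)$'', and read off that the dual plane supports the point set from below; then run the argument in reverse. The only cosmetic difference is that the paper phrases the correspondence at the level of points on the envelope mapping to supporting planes of the lower hull, whereas you phrase it at the level of envelope-contributing planes mapping to lower-hull vertices---but your witness $p_0$ is exactly the paper's $p$, and the invocation of the duality lemma is identical.
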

\begin{proof}
By definition, a point $p$ on the upper envelope of a set of planes
$\left\{ \pi_{i}\right\} $ is above all of them (except several on
which it lies), and by Lemma \ref{lem:p > pi iff H(p) > H(pi)} we
have that the plane $\mathcal{H}\left(p\right)$ is below all the
points $\mathcal{H}\left(\pi_{i}\right)$ (except several which it
touches). The converse is also true - every plane $\pi$ that is part
of the lower convex hull is lower than all points $p_{i}$ (except
several which it touches), therefore $\mathcal{H}\left(\pi\right)$
must be above all the planes $\mathcal{H}\left(p_{i}\right)$.
\end{proof}
We will henceforth denote the convex hull of a set of points $P$
by $\mathcal{CH}\left(P\right)$, and its lower convex hull by $\mathcal{LH}\left(P\right)$.
Figure \ref{fig:Hough-Transform-of} demonstrates the above lemmas
in the 2D case. 
\begin{figure}
\begin{centering}
\includegraphics[scale=0.65]{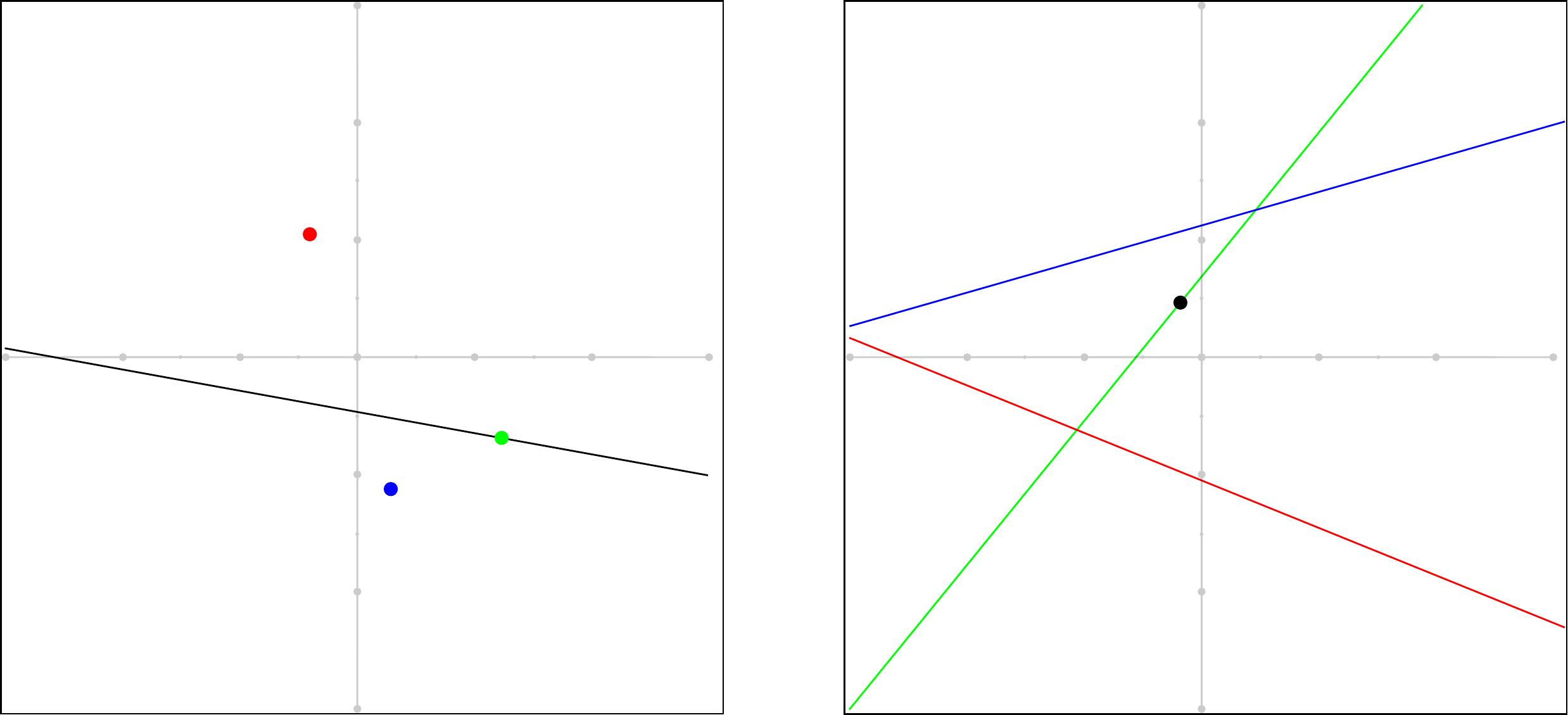}
\par\end{centering}

\caption{\label{fig:Hough-Transform-of}Hough Transform of three points and
a line (plane in 3D). Lines and points, and their duals, are related
by their color.}

\end{figure}

\section{A Linear Min-Max Problem in the Hough space}

Solving the linear program is equivalent to finding the lowest point
of the upper envelope of the set of planes defined by 
\[
z_{k}\left(x,y\right)=a_{k}x+b_{k}y+c\,\,\,\, k=1,2,\dots,n.
\]

We saw that the upper envelope corresponds to the lower convex hull
of the set of points $DP=\left\{ \left(a_{k},b_{k},-c_{k}\right)\right\} _{k=1}^{n}$.
It is now obvious that a solution to Problem \ref{fig:Central-Min-Max-problem. (no last two constraints)},
which is a point $\left(x,y,t\right)$ on the upper envelope, corresponds
to a plane in the dual space, which is defined by a face of $\mathcal{LH}\left(DP\right)$. 

Consider an optimum for the target function of the linear problem,
namely $t_{opt}$, and define the following plane which is parallel
to the $x-y$ plane: $\pi\left(x,y\right)=0\cdot x+0\cdot y+1\cdot t_{opt}$.
The dual to this plane, $\mathcal{H}\left(\pi\right)$, is a point
on the $z$ axis. Because the optimal solution $p$ to Problem \ref{fig:Central-Min-Max-problem. (no last two constraints)}
is a point that is on $\pi$, its dual $\mathcal{H}\left(p\right)$
is a plane on which the point $\mathcal{H}\left(\pi\right)$ must
be. This means that the dual to the plane that has the highest intersection
with the $z=0$ axis (and is part of $\mathcal{LH}\left(DP\right)$)
is the solution to the linear program. Moreover, it is obvious that
if there is a face of the lower convex hull that intersects the $z$
axis, it has the highest intersecting plane. If there is no such face,
either all of the points have a positive $a_{k}$, or all have a negative
$a_{k}$, which means the problem is unbounded.

The last two statements suggest that solving our linear program is
equivalent to finding the face of the lower convex hull that intersects
the $z$ axis. Unfortunately, we are unaware of an $\mathcal{O}\left(n\right)$
method to do so.

Figure \ref{fig:Dual of linear min-max} demonstrates the above lemmas
in the 2D case. 
\begin{figure}
\begin{centering}
\includegraphics[scale=0.65]{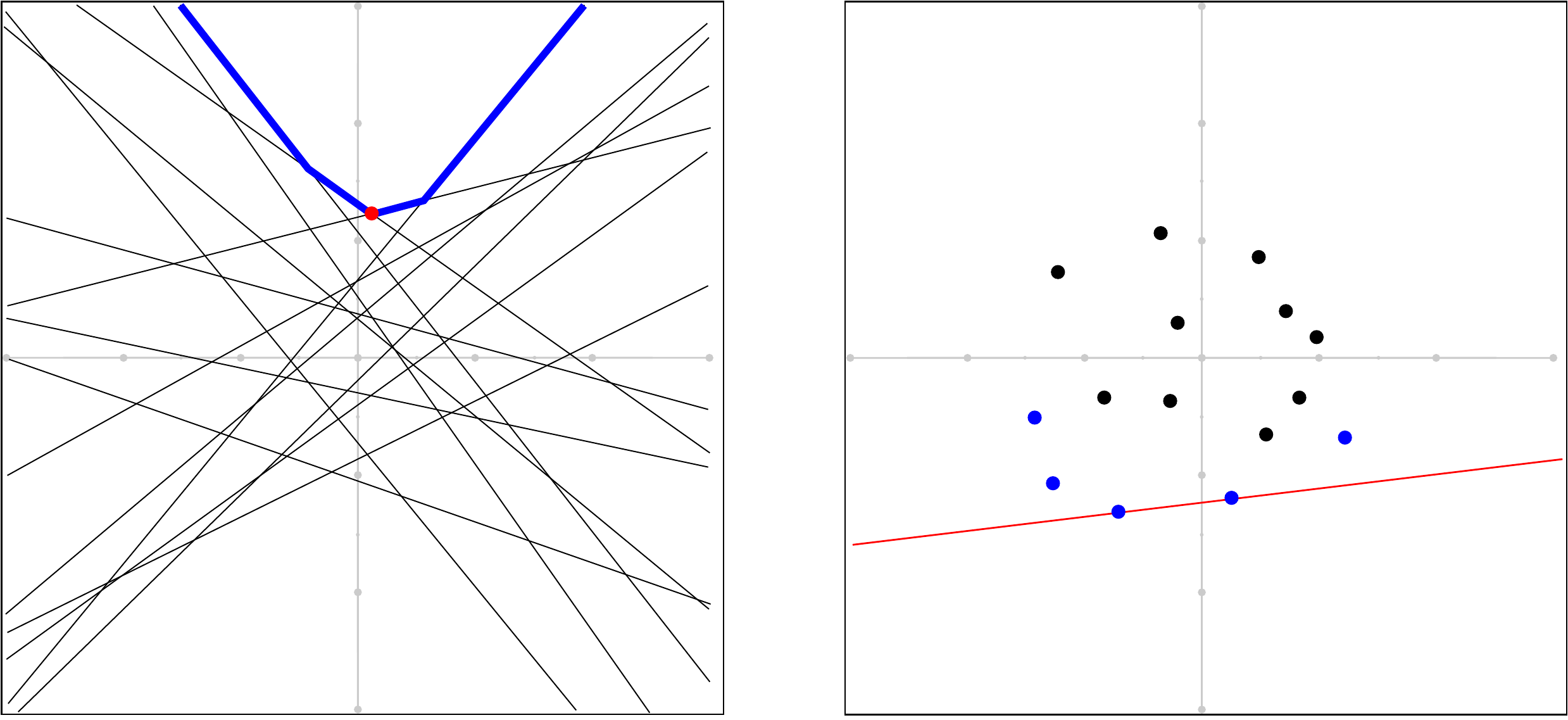}
\par\end{centering}

\caption{\label{fig:Dual of linear min-max}Left: a linear min-max problem,
and its solution (red point). Right: duals of planes defining the
problem, and the dual to the solution (red line).}
\end{figure}

\section{The 2D Case: Solving Problem \ref{fig:Central-Min-Max-problem. (no last two constraints)}
in the Hough plane}

Although the last section ends in a pessimistic note, this section
provides an algorithm which in practice, solves 2D linear min-max
problems much faster than \noun{Cgal}'s \noun{solve\_linear\_program}
\citet{cgal:fgsw-lqps-11}. We were unable, at the time of writing
this work, to provide a complexity proof; however, experiments support
the conjecture it is linear in the number of constraints. See Figure
\ref{fig:CPU-time-in} 
\begin{figure}
\begin{centering}
\includegraphics[scale=0.65]{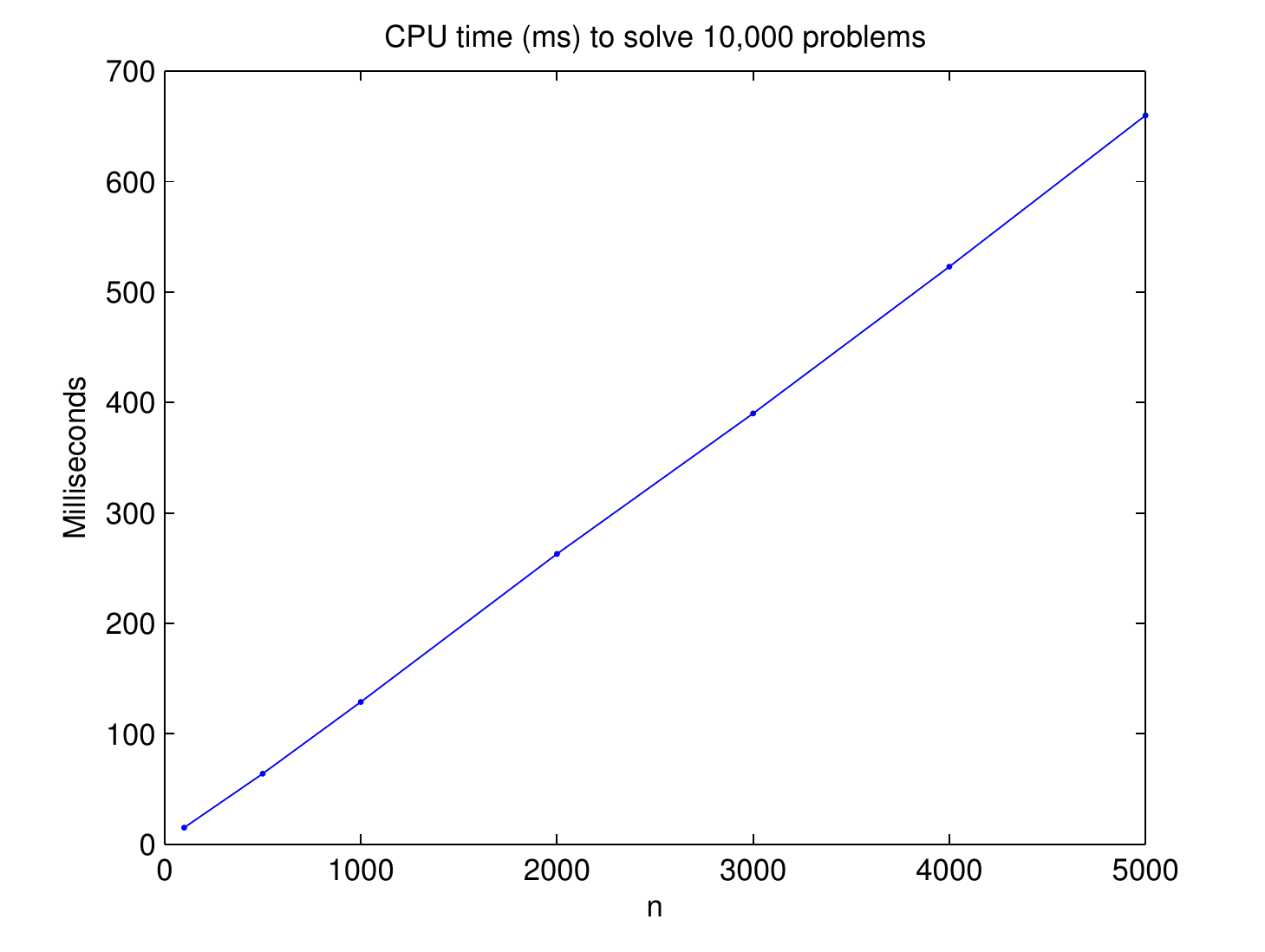}
\par\end{centering}

\caption{\label{fig:CPU-time-in}CPU time in milliseconds to solve 10,000 random
problems with $n$ constraints (our algorithm)}
\end{figure}

To clarify: we remove the $y-$coordinate from our problem formulation
and rename $z$ into $y$, to obtain the following class of problems:
\begin{eqnarray*}
\mbox{minimize}_{x,t} &  & t\\
\mbox{s.t} &  & a_{1}x+b_{1}\leq t\\
 &  & \,\,\,\,\,\,\,\,\,\,\,\,\,\,\vdots\\
 &  & a_{n}x+b_{n}\leq t.
\end{eqnarray*}

The algorithm is as follows.

\begin{algorithm}[H]
Input: A set of $n$ constraints: $a_{i}x+b_{i}\leq t,\,\,\,\, i\in\left[n\right]$

Output: Optimal primal point $\left(x,t\right)$
\begin{enumerate}
\item Transform constraints into a set of points $DP$
\item Partition $DP$ into $L$ (points with negative $x-$coordinate) and
$R$ (positive)
\item Pick some point $p_{0}\in L$
\item Repeat until no change:

\begin{enumerate}
\item Given $p_{i}$ in $L$ ($R$), find a point $p_{i+1}$ in $R$ ($L$)
with the largest clockwise (counter-clockwise) turn from $p_{i}$.
\end{enumerate}
\item Compute the line that intersects the last two points $p_{k}$ and
$p_{k+1}$: $\ell\left(x\right)=mx-mp_{k}^{\left(x\right)}+p_{k}^{\left(y\right)}$
where $m=\left(p_{k+1}^{\left(y\right)}-p_{k}^{\left(y\right)}\right)/\left(p_{k+1}^{\left(x\right)}-p_{k}^{\left(x\right)}\right)$
\item Compute the dual to this plane: $\left(m,mp_{k}^{\left(x\right)}-p_{k}^{\left(y\right)}\right)$
\item Return either (6) or $-\infty$.
\end{enumerate}
\caption{\label{alg:Solving-in-Hough-2D}Solving in Hough plane (2D)}
\end{algorithm}

First, we should prove the algorithm takes a finite number of steps.
\begin{claim}
Step (4) in Algorithm \ref{alg:Solving-in-Hough-2D} terminates.\end{claim}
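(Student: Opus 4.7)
The plan is to identify a real-valued potential function that strictly decreases at each non-terminating iteration and can take only finitely many values; termination of the loop is then forced.

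At every stage, the two most recently chosen points $p_i$ and $p_{i+1}$ lie on opposite sides of the $y$-axis (one in $L$ and one in $R$), so the line $\ell_i$ through them is not vertical and its $y$-intercept $y_i := \ell_i(0)$ is well defined. First I would argue that the ``largest clockwise (counter-clockwise) turn'' selection rule is geometrically equivalent to choosing $p_{i+1}$ so that $\ell_i$ has the smallest possible $y$-intercept among all lines joining $p_i$ to a point on the opposite side. This is a short calculation: for $p$ fixed with $p^{(x)} < 0$ and $q$ ranging over points with $q^{(x)} > 0$, the $y$-intercept
\[
\frac{p^{(y)} q^{(x)} - p^{(x)} q^{(y)}}{q^{(x)} - p^{(x)}}
\]
is a monotone function of the slope of $pq$, and the slope is in turn monotone in the clockwise angular direction from $p$; an analogous statement holds with $L$ and $R$ swapped.

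With this equivalence in hand, the sequence $y_0, y_1, y_2, \dots$ is non-increasing, because at step $i+1$ the candidate $p = p_i$ is always available, so the minimizer $p_{i+2}$ yields a $y$-intercept no larger than $y_i$. Under a general-position assumption (broken, if necessary, by standard symbolic perturbation) the minimizer is unique, so $y_{i+1} < y_i$ whenever the loop does not halt via the ``until no change'' stopping condition. Since each $y_i$ is determined by a pair in $L \times R$, the total number of values it may take is at most $|L| \cdot |R| \leq n^2 / 4$, and hence step (4) halts in at most $n^2 / 4$ iterations.

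The main obstacle lies in the first step: translating the informal ``largest turn'' rule into the crisp statement that $y_i$ is minimized, and in particular handling degenerate configurations (ties in the turn comparison, collinear triples) so as to rule out cycling among distinct pairs that share the same $y$-intercept. Once that geometric lemma is established, the finite-state monotone-descent argument closes the proof.
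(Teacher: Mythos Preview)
Your approach is essentially the paper's: both arguments use the $y$-intercept of the line through the two most recent points as a potential that strictly decreases while taking only finitely many values (one per pair in $L\times R$), forcing termination.

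The one noticeable difference is in how strict decrease is obtained. You first recast the ``largest turn'' rule as ``minimize the $y$-intercept over the opposite side'', and then appeal to general position (unique minimizer) to conclude $y_{i+1}<y_i$ whenever the loop does not stop. The paper bypasses this detour: it simply notes that if the loop continues, the newly chosen point makes a \emph{nonzero} turn of the prescribed sign at $p_{i+1}$, which is equivalent to the slope of $(p_{i+1},p_{i+2})$ being strictly larger (resp.\ smaller) than that of $(p_i,p_{i+1})$; since both lines pass through $p_{i+1}$, which lies on a fixed side of the $y$-axis, the $y$-intercept strictly drops. This gives strictness without invoking general position. Your route is a little longer and adds an assumption you do not really need, though you are right that collinear triples deserve a word---the paper's phrasing ``no change'' tacitly identifies termination with a zero turn, which is what makes its strict-decrease step go through.
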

\begin{proof}
Suppose we are at step $i+1$, and the last two points are $p_{i}$
and $p_{i+1}$. Also assume without loss of generality that $p_{i}\in L$
and therefore $p_{i+1}\in R$. Next, either step (4a) terminates (we're
done) or a new point $p_{i+2}\in L$ is selected. The criterion for
the selection is that $p_{i+2}$ constitutes a counter-clockwise turn
from $p_{i}$, which is equivalent to to the line $\left(p_{i+2},p_{i+1}\right)$
having a larger inclination than the line $\left(p_{i},p_{i+1}\right)$.
However, because both lines intersect $p_{i+1}$, and the second one
has a larger inclination, its intersection with the $y$ axis is smaller
than that of the first one. This means that the $y-$intersection
of any $\left(p_{i},p_{i+1}\right)$ is smaller than the $y-$intersection
of any line $\left(p_{j},p_{j+1}\right)$ for all $i>j$. Because
the number of points is finite, the minimum of $y-$intersections
of lines defined by any pair of points in $DP$ is finite, therefore
step (4a) terminates.
\end{proof}
We are left with showing that when step (4a) terminates, the points
$p_{k}$ and $p_{k+1}$ are indeed part of $\mathcal{LH}\left(DP\right)$;
the fact that the segment $\left(p_{k},p_{k+1}\right)$ intersects
the $y$ axis is obvious.
\begin{proof}
Assume without loss of generality that $p_{k}\in L$ and therefore
$p_{k+1}\in R$. By step (4a) we know that no point in $R$ has a
clockwise turn from the line $\left(p_{k},p_{k+1}\right)$, or equivalently,
that all the points in $R$ are above this line. Similarly, no point
in $L$ is above this same line for symmetric reasons. This means
this line supports $DP$ (from below), which means it must be on its
lower convex hull. 
\end{proof}
Figure \ref{fig:2D algo example} demonstrates the algorithm. 
\begin{figure}
\begin{centering}
\includegraphics[scale=0.65]{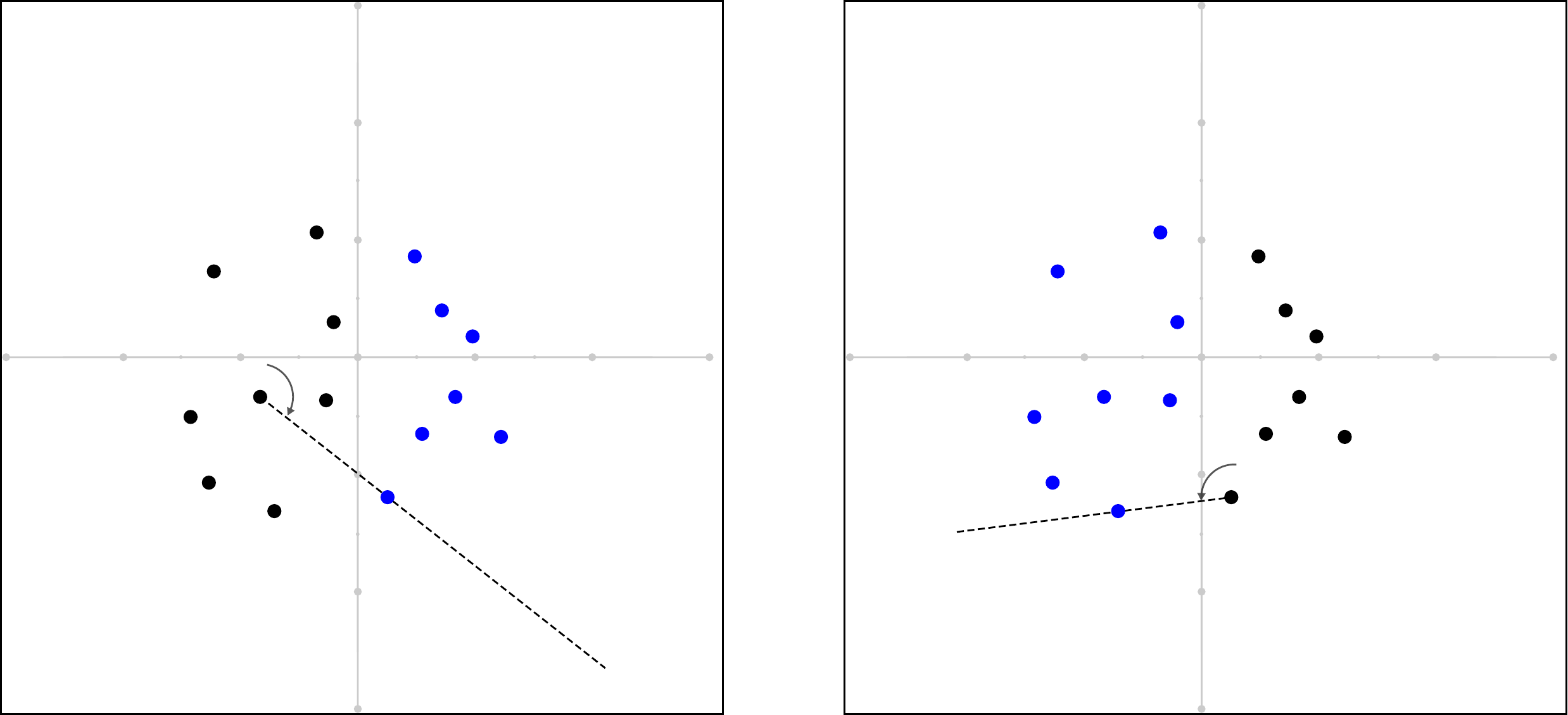}
\par\end{centering}

\caption{\label{fig:2D algo example}Two applications of step 4a}
\end{figure}

\subsection{Experimental Verification}

We pit our algorithm against \noun{Cgal}'s linear programming solver
\citet{cgal:fgsw-lqps-11}. The constraints were drawn from a 2D Gaussian
random variable $X\sim\left[N\left(E=0,\sigma^{2}=10\right)\right]^{2}$.
As is evident in Figure \ref{fig:Ratio-of-time} which depicts the
ratio between running times of the two solvers, versus the number
of constraints $n$, the new algorithm is about 10 times faster than
\noun{Cgal}'s.

\begin{figure}
\centering{}\includegraphics[scale=0.65]{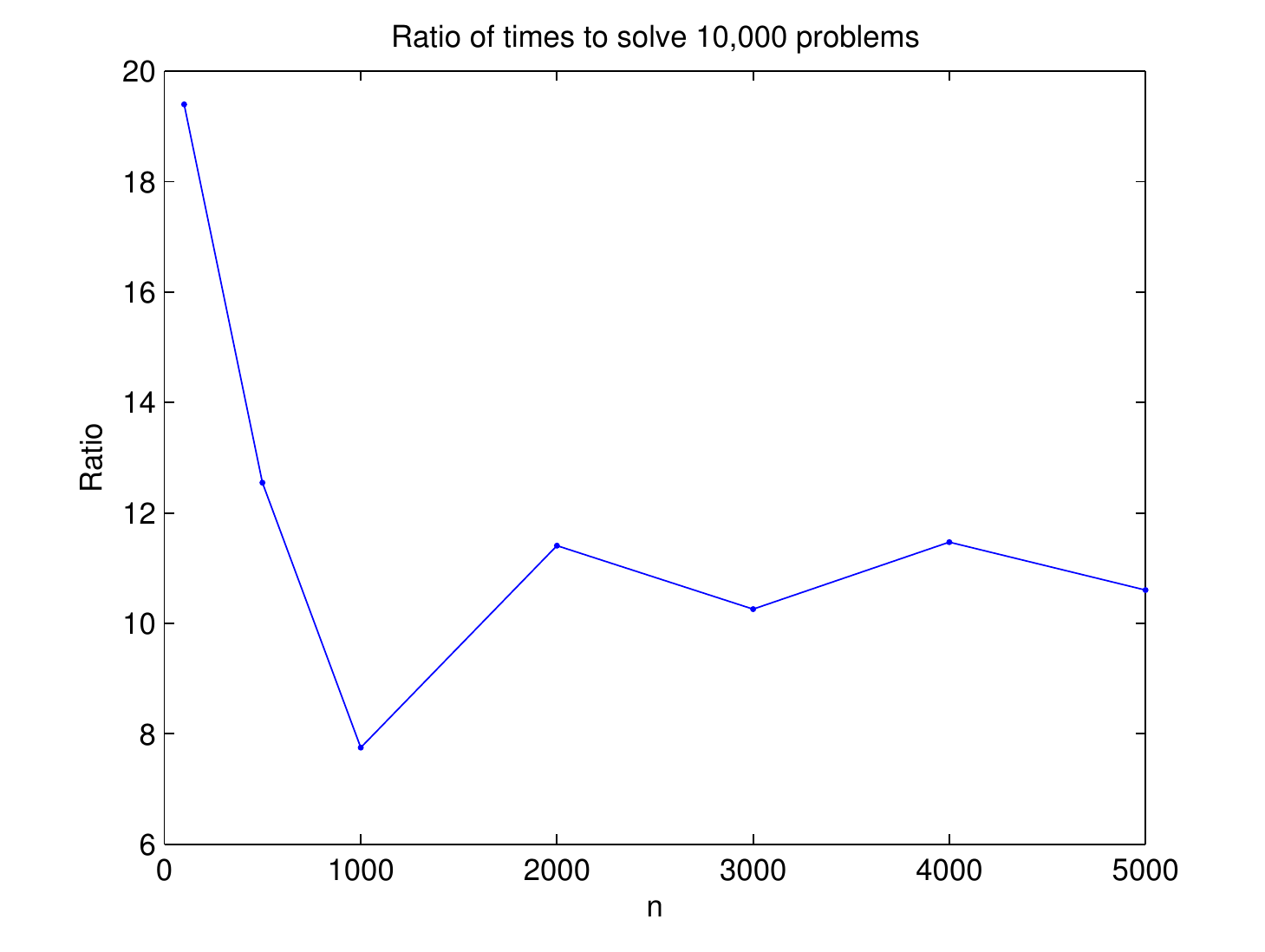}\caption{\label{fig:Ratio-of-time}Ratio of time to solve 10,000 problems,
vs. problem size ($n$)}
\end{figure}

\subsubsection{Implementation notes}

Our solver only uses addition and multiplication and is highly-parallelizable.
No divisions are used, which results in a faster and more stable solver. 

\noun{Cgal}'s solver requires the use of so-cold exact types, such
as rational numbers or arbitrary-precision floating point numbers;
the use of such types is extremely slow at this time, because of the
way \noun{Cgal} uses them. Therefore, the solver was tricked to using
simple machine double-precision floating point numbers. This resulted
in numerical failure when the number of constraints $n$ was greater
than $\approx10^{5}$. Our solver, however, is working even with this
number of constraints. Moreover, the only numerically sensitive step
in our algorithm is in Step (4a), which involves deciding if three
points define a clockwise turn. This is done by computing $\left(p_{1}-p_{0}\right)\wedge\left(p_{2}-p_{0}\right)\overset{?}{>}0$,
which can be done by setting $\tilde{p}_{i}=p_{i}-p_{0}$ and then
deciding if 
\[
\tilde{p}_{1}^{x}\cdot\tilde{p}_{2}^{y}\mbox{ is greater than }\tilde{p}_{1}^{y}\cdot\tilde{p}_{2}^{x},
\]
or equivalently,
\[
\frac{\tilde{p}_{1}^{x}}{\tilde{p}_{1}^{y}}\mbox{ is greater than }\frac{\tilde{p}_{2}^{x}}{\tilde{p}_{2}^{y}}.
\]
This comparison can be performed in exact by converting each fraction
to a continued  fraction, which is still much faster than using an
arbitrary-precision floating point number.

A computer using Core 2 Quad (Q9400) @ 2.66 GHz and 6GB RAM was used
for the benchmarking. 10,000 random problems were solved by each of
the solvers, and their results compared.

\section{The 3D Case: Discarding of Constraints}

To again recede to a pessimistic note, we were unable to extend the
2D algorithm into 3D. However, modifying the problem at hand allows
us to quickly and safely discard of some of the constraints. The modified
problem has two more constraints: the solution point $\left(x,y,t\right)$
must have its coordinates $x$ and $y$ between 0 and 1.
\begin{problem}
\label{fig:Central-Min-Max-problem.}
\begin{eqnarray*}
\mbox{minimize}_{x,y,t} &  & t\\
\mbox{s.t} &  & a_{1}x+b_{1}y+c_{1}\leq t\\
 &  & \,\,\,\,\,\,\,\,\,\,\,\,\,\,\vdots\\
 &  & a_{n}x+b_{n}y+c_{n}\leq t\\
 &  & x\in\left[0,1\right]\\
 &  & y\in\left[0,1\right].
\end{eqnarray*}

\end{problem}
This problem is indeed very specific, but is in the core of the GMDS
algorithm \citet{GMDS} in the $L_{\infty}$ norm, where $x$ and
$y$ represent barycentric coordinates inside a triangle. 

To reiterate, solving Problem \ref{fig:Central-Min-Max-problem.}
is equivalent to finding the face of the 3D lower convex hull of the
points which are dual to the planes defined by the constraints, which
intersects the $z$ axis. In addition, if this (only) plane $z\left(x,y\right)=ax+by+z$
has either $a\notin\left[0,1\right]$ and/or $b\notin\left[0,1\right]$,
the solution must be on the boundary of the feasible set. 

In other words, it is possible to discard of all of the points in
$DP$ which support planes with either $a\notin\left[0,1\right]$
and/or $b\notin\left[0,1\right]$ and not change the solution, provided
that the boundaries of the feasible set are checked. We note that
$a$ corresponds to the inclination in the $x$ direction, and $b$
to the inclination in the $y$ direction.

To illustrate the points which can be discarded, consider a simplification
of the problem to 2D. Figure \ref{fig:Points-in-red} shows a set
of points for which we should find a line that supports the rest of
the points, from below, and has the highest $y-$intersect. Surely,
we can discard of the points on the left of the lowest point, because
these points are either interior, or support segments of $\mathcal{LH}\left(DP\right)$
which have negative inclinations. Also, points on the right of the
lowest point, which define lines with inclination greater than 1 can
also be discarded. 
\begin{figure}
\begin{centering}
\includegraphics[scale=0.7]{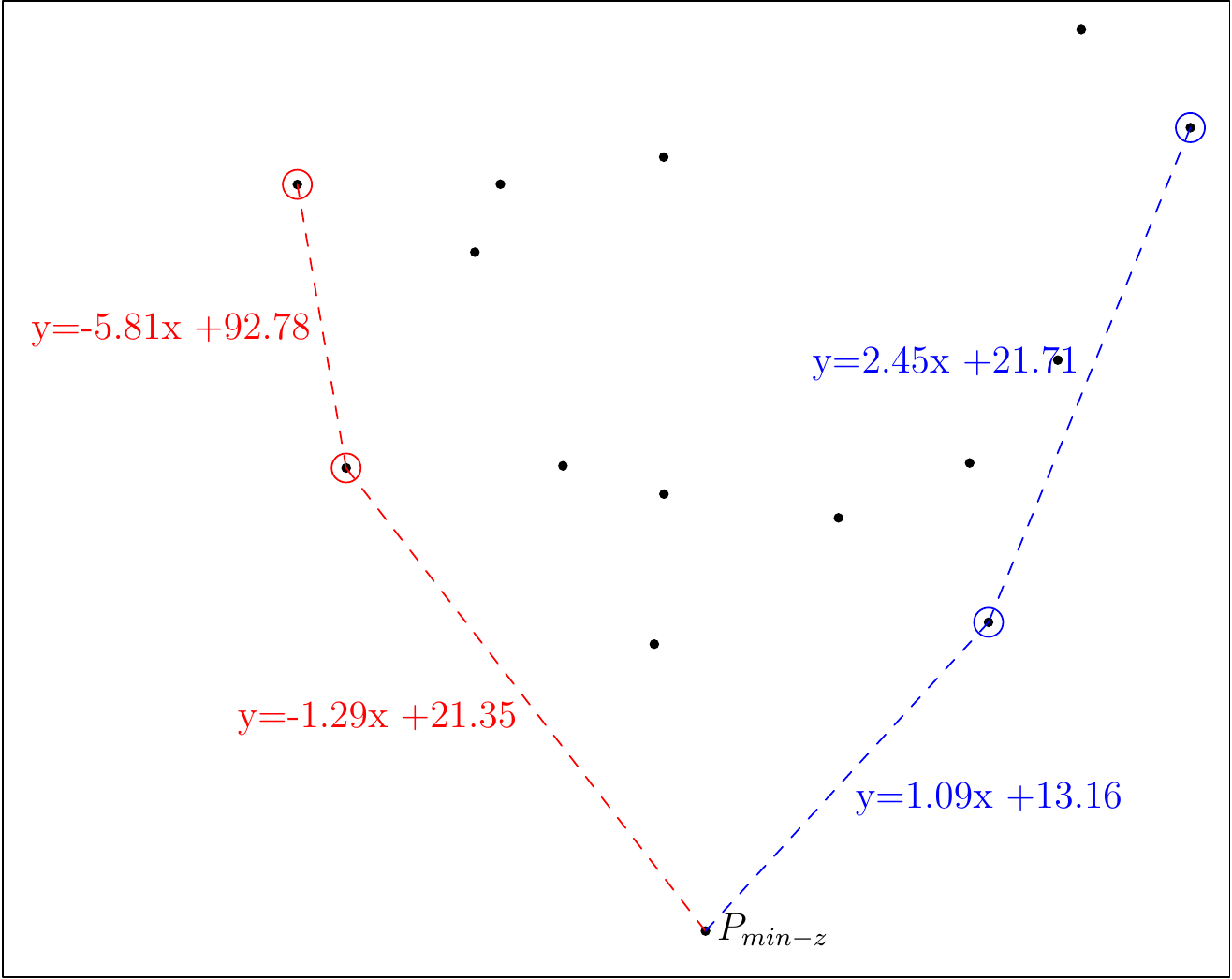}
\par\end{centering}

\caption{\label{fig:Points-in-red}Points in red only support segments which
have a negative inclination. Blue ones only support segments with
inclinations greater than 1.}
\end{figure}

The last two statements are true in 2D, but not necessarily in 3D
- removing points changes the convex hull, and there is danger that
the new convex hull will introduce bogus solutions to the problem.
We will see later that this is not the case, and removing the points
is indeed safe.

The main result is detailed in Theorem \ref{thm:Discarding-of-points is ok},
which is based on the following propositions.
\begin{defn}
A point $p=\left(p_{x},p_{y},p_{z}\right)$ is \emph{behind} another
point $q=\left(q_{x},q_{y},q_{z}\right)$ if $p_{x}<q_{x}$, $p_{y}<q_{y}$
and $p_{z}>q_{z}$.\end{defn}
\begin{prop}
\label{prop:behind cant be a solution}Let $p^{min}\in DP$ be a point
with a minimal $z-$coordinate, and let $p\in DP$ be point which
is behind\emph{ $p^{min}$.} Then any plane defined by $n\cdot\left(q-p\right)=0$
that:
\begin{enumerate}
\item Goes through $p$,
\item Supports $\mathcal{CH}\left(DP\right)$, that is, for any point $q\in DP$
we have $n\cdot\left(q-p\right)\geq0$, and;
\item Has a positive $z-$coordinate $n_{z}$
\end{enumerate}
cannot be a solution to Problem \ref{fig:Central-Min-Max-problem.}
, because its dual point has negative $x$ and/or $y$ coordinate.\end{prop}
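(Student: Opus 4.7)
The plan is to translate the geometric support condition at $p^{min}$ into an inequality on the components of $n$, and then read off the dual coordinates directly from $n$. First I would rewrite the plane in the graph form $z = aX + bY + c$. Starting from $n\cdot(q-p)=0$ with $n_z > 0$, solving for $Z$ gives
\[
Z \;=\; -\frac{n_x}{n_z}(X - p_x) \;-\; \frac{n_y}{n_z}(Y - p_y) \;+\; p_z,
\]
so the coefficients are $a = -n_x/n_z$ and $b = -n_y/n_z$. By the duality convention set up earlier, the dual point is $(a,b,-c)$, so showing that the dual has negative $x$- or $y$-coordinate is equivalent to showing that at least one of $n_x, n_y$ is strictly positive.

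Next I would instantiate the support hypothesis at the distinguished point $q = p^{min} \in DP$, giving $n_x(p^{min}_x - p_x) + n_y(p^{min}_y - p_y) + n_z(p^{min}_z - p_z) \ge 0$. I would then use the definition of ``behind'' to unpack signs: $p^{min}_x - p_x > 0$, $p^{min}_y - p_y > 0$, and $p^{min}_z - p_z < 0$. Combined with $n_z > 0$, the $z$-term is strictly negative, and rearranging yields
\[
n_x\,(p^{min}_x - p_x) \;+\; n_y\,(p^{min}_y - p_y) \;\ge\; n_z\,(p_z - p^{min}_z) \;>\; 0.
\]

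The conclusion follows immediately: the left-hand side is a nonnegative-weighted linear combination of $n_x$ and $n_y$ (with strictly positive weights coming from the ``behind'' strict inequalities), and it is strictly positive, so at least one of $n_x, n_y$ must be strictly positive. Via the identification from the first paragraph, this forces $a < 0$ or $b < 0$, placing the dual point outside $[0,1]^2$ in its first or second coordinate, which is exactly what rules it out as a solution to Problem \ref{fig:Central-Min-Max-problem.}.

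I do not expect any real obstacle here; the argument is a three-line sign computation once the duality bookkeeping is set up correctly. The only point deserving care is orientation: the hypothesis $n_z > 0$ must be the one consistent with ``supporting from below'' in the sense used by Lemma \ref{lem:p > pi iff H(p) > H(pi)}, so the inequality $n\cdot(q-p)\ge 0$ is the correct (rather than reversed) support condition. If the signs are tracked carefully, the proposition reduces to the displayed inequality above.
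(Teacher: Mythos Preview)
Your proof is correct and matches the paper's approach essentially line for line: instantiate the support condition at $p^{min}$, use the sign pattern of $p^{min}-p$ together with $n_z>0$ to force $n_x>0$ or $n_y>0$, and translate this via $a=-n_x/n_z$, $b=-n_y/n_z$ into a negative dual coordinate. The only cosmetic difference is that the paper names the positive differences $\alpha,\beta,\gamma$ and writes the key inequality as $n_x\alpha+n_y\beta>n_z\gamma$ in one stroke.
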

\begin{proof}
Apply Requirement (2) to the point $q=p^{min}$, for which $q-p$
equals $\left(+\alpha,+\beta,-\gamma\right)$ for some positive $\alpha,\beta$
and $\gamma$. The result is the relation $n_{x}\alpha+n_{y}\beta>n_{z}\gamma$,
which forces either of $n_{x}$ or $n_{y}$ to be positive. Now, because
$\left(n_{x},n_{y},n_{z}\right)$ is a normal to a plane, the corresponding
plane equation must be $n_{z}\cdot z\left(x,y\right)=\left(-n_{x}\right)x+\left(-n_{y}\right)y+c$
for some constant $c$. Because $n_{z}$ is positive by Requirement
(3), either of the coefficients of the plane must be negative, which
means the dual to this plane cannot be a solution to Problem \ref{fig:Central-Min-Max-problem.}.
\end{proof}
We can state a similar result for points which are ``too steep''
with respect to $p^{min}$.
\begin{defn}
A point $p=\left(p_{x},p_{y},p_{z}\right)$ is \emph{too steep with
respect to} another point $q=\left(q_{x},q_{y},q_{z}\right)$ if $p\succ q$
(larger in all coordinates), and 
\[
\frac{p_{z}-q_{z}}{p_{x}-q_{x}}>1\mbox{ and }\frac{p_{z}-q_{z}}{p_{y}-q_{y}}>1.
\]
\end{defn}
\begin{prop}
\label{prop:too steep cant be solution}Let $p^{min}\in DP$ be a
point with a minimal $z-$coordinate, and let $p\in DP$ be a point
which is \emph{too steep with respect to} $p^{min}$. That is, with
all coordinates larger than those of $p^{min}$ such that the vector
$p^{min}-p=\left(-\alpha,-\beta,-\gamma\right)$, $\alpha,\beta$
and $\gamma$ positive, satisfies 
\[
\alpha<\gamma\mbox{ and }\beta<\gamma.
\]
Then any plane defined by $n\cdot\left(q-p\right)=0$ that:
\begin{enumerate}
\item Goes through $p$,
\item Supports $\mathcal{CH}\left(DP\right)$, that is, for any point $q\in DP$
we have $n\cdot\left(q-p\right)\geq0$, and;
\item Has a positive $z-$coordinate $n_{z}$
\end{enumerate}
cannot be a solution to Problem \ref{fig:Central-Min-Max-problem.},
because its dual point has its $x$ and/or $y$ coordinate larger
than 1.\end{prop}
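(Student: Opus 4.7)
The plan is to mirror the proof of Proposition \ref{prop:behind cant be a solution} almost line for line, but to track the supporting inequality in the direction that pushes the dual coordinates above $1$ rather than below $0$. The overall strategy has three steps: evaluate Requirement (2) at a judiciously chosen $q \in DP$, translate the resulting inequality on the normal $n$ into an inequality on the dual coordinates, and use the too-steep hypothesis to rule out $(u,v)\in[0,1]^2$.

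First, I would apply Requirement (2) at $q=p^{min}$. By hypothesis $p^{min}-p=(-\alpha,-\beta,-\gamma)$ with $\alpha,\beta,\gamma>0$ and $\alpha<\gamma$, $\beta<\gamma$. The supporting condition becomes $-n_x\alpha-n_y\beta-n_z\gamma\ge 0$, i.e.
\[
n_x\alpha + n_y\beta + n_z\gamma \le 0.
\]
Since $n_z>0$ and $\gamma>0$, this forces $n_x\alpha+n_y\beta<0$, so at least one of $n_x,n_y$ is strictly negative (the mirror of the observation in Proposition \ref{prop:behind cant be a solution} that at least one was positive).

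Second, I would pass from $n$ to the dual point. Writing the plane in graph form $z(x,y) = (-n_x/n_z)\,x + (-n_y/n_z)\,y + c/n_z$, its dual point is $(u,v)=(-n_x/n_z,\,-n_y/n_z)$. Dividing the inequality above by $n_z>0$ and rearranging gives the clean form
\[
u\alpha + v\beta \ge \gamma.
\]
If $u<0$ or $v<0$ we are already done (the dual lies outside $[0,1]^2$), so the interesting regime is $u,v\ge 0$, where I would like to conclude $u>1$ or $v>1$.

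Third, I would close the argument using the too-steep hypothesis $\alpha<\gamma$ and $\beta<\gamma$: if $u\le 1$ and $v\le 1$, then $u\alpha+v\beta \le \alpha + \beta$, so the supporting inequality forces $\alpha+\beta \ge \gamma$, which should be ruled out by the too-steep regime to produce a contradiction. The main obstacle is precisely this last step: the two individual inclination bounds $\alpha,\beta<\gamma$ do not by themselves preclude $\alpha+\beta\ge \gamma$, and one either needs a case split on the signs of $n_x,n_y$ (closing the negative-sign cases by the dual-coordinate argument of Proposition \ref{prop:behind cant be a solution} and handling the mixed-sign case by weighted comparison), or one needs to invoke additional supporting constraints coming from the other $DP$-vertices of the face of $\mathcal{LH}(DP)$ through $p$. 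I would expect the final write-up to mimic the tidy algebraic flow of the previous proposition, with this sharper case analysis occupying the bulk of the work.
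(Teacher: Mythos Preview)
Your approach is exactly the paper's: apply Requirement~(2) at $q=p^{min}$, divide by $n_z>0$, and read off the inequality $u(\alpha/\gamma)+v(\beta/\gamma)>1$ for the dual coordinates $u=-n_x/n_z$, $v=-n_y/n_z$. The paper then simply asserts that ``if both $\alpha/\gamma$ and $\beta/\gamma$ are smaller than $1$, $n_x$ and $n_y$ cannot be both larger than $-1$,'' i.e.\ that $u\le 1$ and $v\le 1$ is impossible, and stops there.

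The obstacle you flag in your third step is real, and the paper does not resolve it. From $u(\alpha/\gamma)+v(\beta/\gamma)>1$ with $0<\alpha/\gamma,\beta/\gamma<1$ one \emph{cannot} conclude $\max(u,v)>1$ by algebra alone: take $\alpha/\gamma=\beta/\gamma=0.9$ and $u=v=0.8$, which gives $1.44>1$ with both dual coordinates in $[0,1]$. So the paper's inference is, as written, unjustified; your instinct that one needs either a finer case split on the signs of $n_x,n_y$ or additional supporting constraints from other vertices of the face is the correct diagnosis. In short, you have reproduced the paper's argument faithfully and, in addition, located a genuine gap that the paper glosses over rather than closes.
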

\begin{proof}
In a similar way to the proof of Proposition \ref{prop:behind cant be a solution},
we obtain 
\[
\left(n_{x},n_{y},n_{z}\right)\cdot\left(-\alpha,-\beta,-\gamma\right)=\left(-n_{x}\right)\alpha+\left(-n_{y}\right)\beta-n_{z}\gamma,
\]
which implies
\[
\left(-\frac{n_{x}}{n_{z}}\right)\alpha+\left(-\frac{n_{y}}{n_{z}}\right)\beta>\gamma.
\]
We can normalize $n$ such that $n_{z}$ will be 1, without changing
its sign, because $n_{z}$ is positive by Requirement (3):
\[
\left(-n_{x}\right)\frac{\alpha}{\gamma}+\left(-n_{y}\right)\frac{\beta}{\gamma}>1.
\]
Now, if both $\alpha/\gamma$ and $\beta/\gamma$ are smaller than
1, $n_{x}$ and $n_{y}$ cannot be both larger than -1. Noting that
the explicit plane equation which the normal vector $n$ (with $n_{z}=1$)
induces is $z\left(x,y\right)=\left(-n_{x}\right)x+\left(-n_{y}\right)y+c$
for some $c$, leads to the conclusion that at least one of the coefficients
of the plane must be larger than 1, and therefore its dual point cannot
be a solution to the linear program.
\end{proof}
We state a simple characterization of lower convex hulls :
\begin{lem}
\label{lem:nz positive}Any plane $\pi$ defined by $n\cdot\left(q-p\right)=0$,
which supports $\mathcal{LH}\left(DP\right)$ must have a positive
$z-$coordinate of $n$ , $n_{z}$.\end{lem}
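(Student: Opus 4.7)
The plan is to translate the geometric statement ``$\pi$ supports $\mathcal{LH}(DP)$'' into an algebraic condition on the sign of $n_z$, using directly the half-space inequality $n \cdot (q - p) \geq 0$ that defines the support condition. The core observation is that supporting the \emph{lower} hull is the same as lying weakly below every point of $DP$ when $\pi$ is viewed as the graph $z = z_\pi(x, y)$ over the $xy$-plane, and this orientation constraint is exactly what pins down the sign of $n_z$.

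First I would recall the standard characterization: a plane $\pi$ through $p$ supports $\mathcal{LH}(DP)$ iff every $q \in DP$ satisfies $q_z \geq z_\pi(q_x, q_y)$, i.e.\ the entire point set lies on or above $\pi$ viewed as a height function. Then, starting from the inequality
\[
n_x(q_x - p_x) + n_y(q_y - p_y) + n_z(q_z - p_z) \geq 0,
\]
I would divide by $n_z$ and do a case analysis on its sign. If $n_z > 0$, the inequality rearranges to $q_z \geq p_z - [n_x(q_x - p_x) + n_y(q_y - p_y)]/n_z$, which is exactly the ``above $\pi$'' condition and gives the desired conclusion. If $n_z < 0$, dividing flips the inequality, placing every $q \in DP$ weakly \emph{below} $\pi$, which would make $\pi$ an upper supporting plane of $\mathcal{CH}(DP)$ and contradict the hypothesis.

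Finally I would rule out the degenerate case $n_z = 0$. In that case $\pi$ is vertical (parallel to the $z$-axis), and the supporting inequality degenerates to $n_x(q_x - p_x) + n_y(q_y - p_y) \geq 0$, independent of $z$. Such a plane supports only a ``side'' face of $\mathcal{CH}(DP)$: translating it arbitrarily in $\pm z$ preserves the supporting inequality, so no face it supports can be visible from $z = -\infty$, hence it cannot support a face of $\mathcal{LH}(DP)$.

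The main obstacle is really just the $n_z = 0$ borderline, since the $n_z > 0$ versus $n_z < 0$ split is a mechanical sign analysis. The $n_z = 0$ case hinges on having the ``right'' definition of $\mathcal{LH}(DP)$, namely as the union of faces of $\mathcal{CH}(DP)$ visible from $z = -\infty$; under any natural convention this excludes vertical supporting planes, and the argument goes through cleanly.
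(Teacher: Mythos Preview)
Your argument is correct and rests on the same geometric fact the paper uses, namely that a plane supporting the lower hull must have all of $DP$ weakly above it when written as a graph $z=z_\pi(x,y)$. The paper, however, packages this as a single direct test rather than a three-way case split: it picks a point $q_1$ on $\pi$, sets $q_2=q_1+\epsilon(0,0,1)$, and observes that the support inequality applied to the upward displacement immediately yields $n_z>0$. Your route unpacks the same idea by solving the inequality $n\cdot(q-p)\ge 0$ for $q_z$ under each sign hypothesis on $n_z$ and deriving contradictions for $n_z<0$ and $n_z=0$. The gain from your version is that the degenerate vertical case $n_z=0$ is handled explicitly (the paper's one-liner silently assumes the vertical displacement produces a \emph{strict} inequality, which is exactly what fails when $n_z=0$); the cost is a longer argument for what is ultimately the single observation that $(0,0,1)$ lies in the open supporting half-space.
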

\begin{proof}
Since $\pi$ is part of the lower convex hull, there must be a pair
of points $q_{1}$ and $q_{2}$ such that $q_{1}$ is on $\pi$, and
$q_{2}=q_{1}+\epsilon\left(0,0,1\right)$; therefore, the condition
$n\cdot\left(q_{1}-q_{2}\right)>0$ forces $n_{z}>0$. 
\end{proof}
We are now ready to show that we can safely discard of a class of
constraints.
\begin{thm}
\label{thm:Discarding-of-points is ok}Discarding of points which
are behind $p^{min}$ (as defined in Proposition \ref{prop:behind cant be a solution}),
or are too steep (as defined in Proposition \ref{prop:too steep cant be solution})
leads to a min-max problem which is equivalent to Problem \ref{fig:Central-Min-Max-problem.}.\end{thm}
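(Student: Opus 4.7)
The plan is to work in Hough space and show that the plane $\pi^{*}$ dual to the LP optimum passes through no discarded point; the theorem then reduces to the standard LP observation that deleting non-binding constraints does not change the optimum.

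First I would fix an optimum $(x^{*},y^{*},t^{*})$ of Problem \ref{fig:Central-Min-Max-problem.} and take its Hough dual $\pi^{*}=\mathcal{H}((x^{*},y^{*},t^{*}))$. Feasibility of the optimum says every point of $DP$ lies on or above $\pi^{*}$, so $\pi^{*}$ is a supporting plane of $\mathcal{LH}(DP)$, and the contact set $F^{*}:=DP\cap\pi^{*}$ is exactly the collection of dual points of the binding $a_{i}x+b_{i}y+c_{i}\le t$ constraints. By Lemma \ref{lem:nz positive}, the upward normal $n$ of $\pi^{*}$ has $n_{z}>0$, so hypotheses (2) and (3) of Propositions \ref{prop:behind cant be a solution} and \ref{prop:too steep cant be solution} are satisfied automatically for $\pi^{*}$.

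Next I would argue by contradiction that $F^{*}$ contains no discarded point. If some $p\in F^{*}$ were behind, or too steep with respect to, $p^{\min}$, then hypothesis (1) of the relevant proposition is also met because $\pi^{*}$ passes through $p$, and its conclusion forces the dual $\mathcal{H}(\pi^{*})=(x^{*},y^{*},t^{*})$ to have its $x$- or $y$-coordinate outside $[0,1]$. This contradicts $(x^{*},y^{*})\in[0,1]^{2}$, so every discarded constraint is strict at the optimum. To close the argument I would use a short continuity/convexity perturbation: $(x^{*},y^{*},t^{*})$ is feasible for the reduced problem, so the reduced optimum is at most $t^{*}$; if the reduced problem admitted a strictly better point $(x',y',t')$, then on the segment joining it to $(x^{*},y^{*},t^{*})$ the non-discarded and box constraints are preserved by convexity while the discarded ones remain strictly satisfied near the $(x^{*},y^{*},t^{*})$ endpoint by continuity, yielding a feasible original-problem point with objective below $t^{*}$ and contradicting optimality.

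The step I expect to be hardest is the second one, in particular the boundary case $(x^{*},y^{*})\in\partial[0,1]^{2}$: there the contact set $F^{*}$ may fail to be a full-dimensional face of $\mathcal{LH}(DP)$, so one cannot simply invoke ``the face of $\mathcal{LH}(DP)$ dual to the optimum''; some care is needed to apply Lemma \ref{lem:nz positive} and the propositions directly to the specific plane $\pi^{*}$ and the specific vertex $p\in F^{*}$, regardless of the affine dimension of $F^{*}$. Once that careful application is granted, the rest of the argument is essentially bookkeeping.
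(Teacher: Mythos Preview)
Your approach is correct but genuinely different from the paper's. The paper argues entirely in the dual picture: after observing (via Proposition~\ref{prop:behind cant be a solution} and Lemma~\ref{lem:nz positive}) that no face of $\mathcal{LH}(DP)$ through a discarded point $p$ can be the solution face, it still has to rule out the possibility that deleting $p$ creates \emph{new} faces of $\mathcal{LH}(DP\setminus\{p\})$ that masquerade as solutions. For this it invokes the incremental convex-hull construction: the faces that appear when $p$ is removed are exactly the faces that would be deleted when $p$ is inserted, i.e.\ the faces visible from $p$; each such face, translated to pass through $p$, is again a supporting plane of $\mathcal{CH}(DP)$ with $n_{z}>0$, so Proposition~\ref{prop:behind cant be a solution} (resp.\ \ref{prop:too steep cant be solution}) disqualifies it as well. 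Your route is a primal LP argument: you show directly that no discarded constraint can be active at an optimum $(x^{*},y^{*},t^{*})$ (since otherwise $\pi^{*}$ would satisfy the hypotheses of the relevant proposition and force $(x^{*},y^{*})\notin[0,1]^{2}$), and then use a standard segment/continuity argument to conclude that dropping strictly inactive constraints cannot lower the optimum. Your argument is shorter and avoids the convex-hull machinery; the paper's argument explains, face by face, what happens to $\mathcal{LH}(DP)$ under the deletion, which is more informative for the dual-space algorithmic picture.

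Two minor remarks. First, you do not actually need Lemma~\ref{lem:nz positive}: the plane $\pi^{*}=\mathcal{H}((x^{*},y^{*},t^{*}))$ has equation $z=x^{*}a+y^{*}b-t^{*}$, so its upward normal is $(-x^{*},-y^{*},1)$ and $n_{z}=1>0$ automatically. Second, your concern about the boundary case $(x^{*},y^{*})\in\partial[0,1]^{2}$ is not a real obstacle: Propositions~\ref{prop:behind cant be a solution} and~\ref{prop:too steep cant be solution} yield \emph{strict} inequalities (a coordinate $<0$ or $>1$), which already contradict membership in the closed box, and they apply to any supporting plane through $p$ regardless of whether the contact set $F^{*}$ is a full-dimensional face.
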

\begin{proof}
We will prove for points which are behind $p^{min}$; the proof for
points which are too steep is almost identical, and is left for the
reader.

Let $p$ be a point which is behind $p^{min}$. First, any face which
$p$ supports and is part of $\mathcal{LH}\left(DP\right)$ cannot
be a solution, therefore removing $p$ does not change the solution
to Problem \ref{fig:Central-Min-Max-problem.} (apply Lemma \ref{lem:nz positive}
and Proposition \ref{prop:behind cant be a solution}). However, there
still remains a possibility that removing $p$ creates new faces in
$\mathcal{LH}\left(DP\right)$ which results in a bogus solution to
Problem \ref{fig:Central-Min-Max-problem.}. We now show it is not
the case.

We consider the incremental convex hull construction algorithm detailed
in \citet{berg2000computational}. Suppose we have $A=\mathcal{LH}\left(DP\backslash p\right)$,
and would like to construct $B=\mathcal{LH}\left(DP\right)$. This
is done by finding all the faces of $A$ which are \emph{visible}
to $p$, that is, all faces which separate $p$ from $DP\backslash p$.
These faces are then removed, and the hole is filled using faces that
$p$ supports. 

A convex hull of a set of points $P$ is unique; one way to see this
is to recall one definition of $\mathcal{CH}\left(P\right)$ - the
intersection of all convex sets that contain $P$. This uniqueness
implies that the faces that are added to the convex hull, when we
remove $p$, are the faces that would have been removed if we constructed
$B$ from $A$ using the aforementioned algorithm. This characterizes
the faces that are added to $A$ when we remove $p$ - they all define
planes which separate $p$ from $DP\backslash p$. See Figure \ref{fig:Incremental-convex-hull}.
\begin{figure}
\begin{centering}
\includegraphics[scale=0.65]{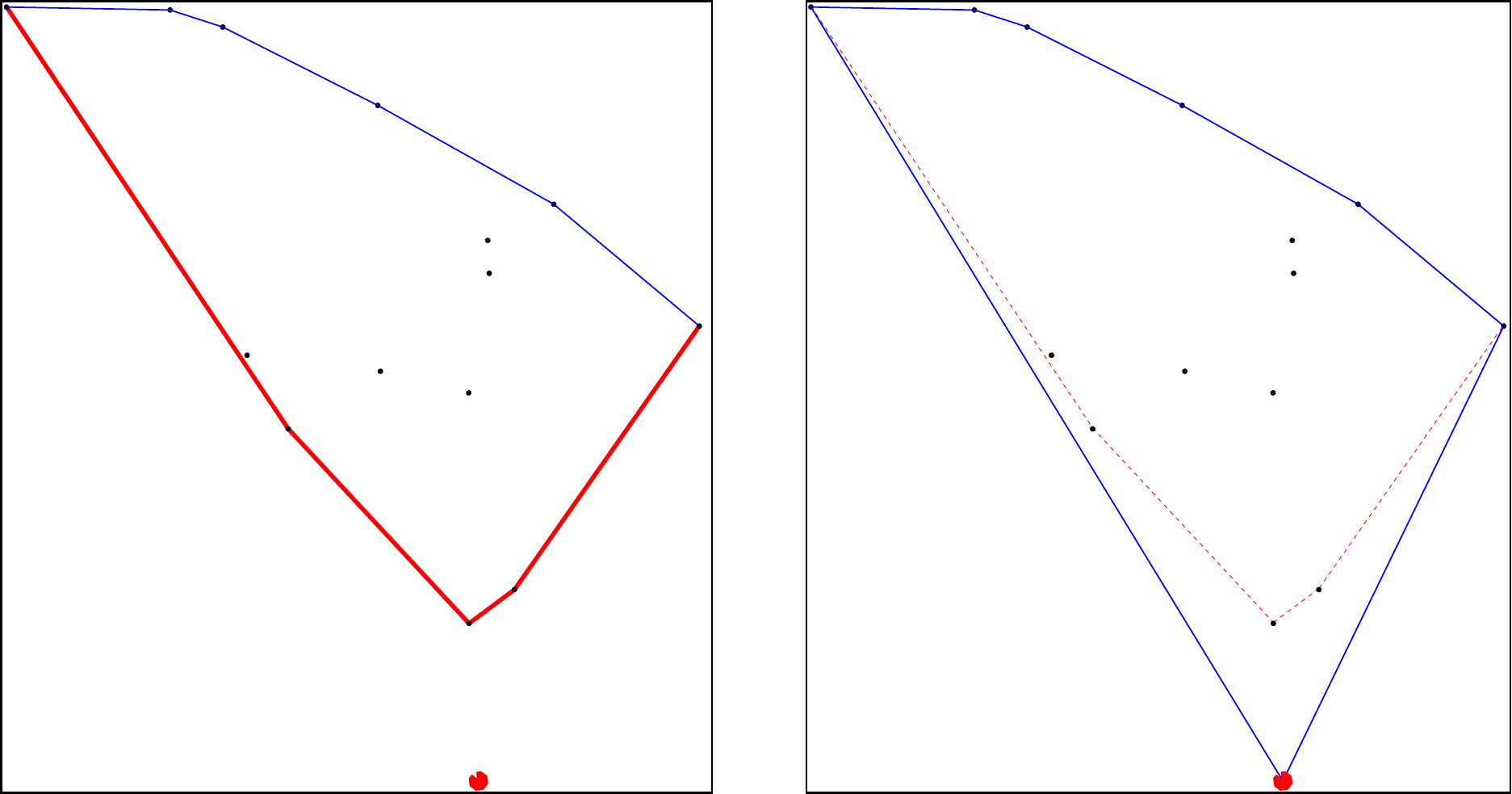}
\par\end{centering}

\caption{\label{fig:Incremental-convex-hull}Incremental convex hull construction.
Left: before adding the marked point, with visible faces emphasized.
Right: after the addition. Note that removing the marked point from
the convex hull on the right image, would result in the convex hull
that appears in the left image.}
\end{figure}

Formally, these planes are defined by some normal $n$ and a point
$p'$, and have $n\cdot\left(q-p'\right)\geq0$ for all $q\in DP\backslash p$
but $n\cdot\left(p-p'\right)\leq0$. Such a plane, when translated
so that its passes through $p$, is defined by $n\cdot\left(q-p\right)=0$.
We note that for any $q\in DP\backslash p$, we have 
\begin{eqnarray*}
n\cdot\left(q-p\right) & = & n\cdot\left(q-p+p'-p'\right)\\
 & = & n\cdot\left(q-p'\right)+\left(-n\cdot\left(p-p'\right)\right)\\
 & \geq & 0,
\end{eqnarray*}
which means the translated plane goes through $p$, supports $\mathcal{CH}\left(DP\right)$
and has a positive $n_{z}$ component (we assumed the original face
belonged to $\mathcal{LH}\left(DP\right)$). This means it satisfies
the requirements of Proposition \ref{prop:behind cant be a solution},
and therefore this plane (translated or not) cannot be a solution
to Problem \ref{fig:Central-Min-Max-problem.}, which concludes the
proof.
\end{proof}
With regard to the computational cost of these purgings, we note that
they are highly-parallelizable, in addition to being a single-pass
over the points.

\bibliographystyle{plainnat}
\bibliography{../Research/research}

\begin{thebibliography}{5}
\providecommand{\natexlab}[1]{#1}
\providecommand{\url}[1]{\texttt{#1}}
\expandafter\ifx\csname urlstyle\endcsname\relax
  \providecommand{\doi}[1]{doi: #1}\else
  \providecommand{\doi}{doi: \begingroup \urlstyle{rm}\Url}\fi

\bibitem[Bronstein et~al.(2006)Bronstein, Bronstein, and Kimmel]{GMDS}
Alexander~M. Bronstein, Michael~M. Bronstein, and Ron Kimmel.
\newblock Generalized multidimensional scaling: a framework for
  isometry-invariant partial surface matching.
\newblock \emph{Proc. National Academy of Sciences (PNAS)}, Volume
  103/5:\penalty0 1168--1172, January 2006.

\bibitem[de~Berg(2000)]{berg2000computational}
Mark de~Berg.
\newblock \emph{Computational geometry: algorithms and applications}.
\newblock Springer, 2000.
\newblock ISBN 9783540656203.
\newblock URL \url{http://books.google.com/books?id=C8zaAWuOIOcC}.

\bibitem[Fischer et~al.(2011)Fischer, G\"{a}rtner, Sch{\"o}nherr, and
  Wessendorp]{cgal:fgsw-lqps-11}
Kaspar Fischer, Bernd G\"{a}rtner, Sven Sch{\"o}nherr, and Frans Wessendorp.
\newblock Linear and quadratic programming solver.
\newblock In \emph{{CGAL} User and Reference Manual}. {CGAL Editorial Board},
  {3.8} edition, 2011.
\newblock
  http\://www.cgal.org/Manual/3.8/doc\_html/cgal\_manual/packages.html\#Pkg\:QPSolver.

\bibitem[Hough(1962)]{Hough}
Paul V.~C. Hough.
\newblock Method and means for recognizing complex patterns, March 1962.
\newblock URL \url{http://www.google.com/patents?vid=3069654}.

\bibitem[Megiddo(1984)]{Megiddo}
Nimrod Megiddo.
\newblock Linear programming in linear time when the dimension is fixed.
\newblock \emph{Journal of The ACM}, 31:\penalty0 114--127, 1984.
\newblock \doi{10.1145/2422.322418}.

\end{thebibliography}

\end{document}